 \DeclareMathOperator{\Sp}{Sp}
\DeclareMathOperator{\diam}{diam}
\DeclareMathOperator{\is}{is}
\newtheorem{theorem}{Теорема}
\newtheorem{lemma}[theorem]{Лемма}
\newtheorem{definition}[theorem]{Определение}
\newtheorem{corollary}[theorem]{Следствие}
\newtheorem{statement}[theorem]{Утверждение}
\theoremstyle{definition}
\newtheorem{remark}[theorem]{Замечание}
\newtheorem{example}[theorem]{Пример}
\begin{document}
\begin{center}{\LARGE
On the Gomori-Hu inequality}
\\\bigskip
{\large E. Petrov and O. Dovgoshey}
\bigskip
\end{center}
\begin{center}
\textbf{Abstract}
\end{center}
It was proved by Gomori and Hu in 1961 that for every finite nonempty ultrametric space $(X,d)$ the following inequality $|\Sp(X)|\leqslant |X|-1$ holds with $\Sp(X)=\{d(x,y):x,y \in  X, x\neq y\}$.
We characterize the spaces $X$, for which the equality in this inequality is attained by the structural properties of some  graphs and show that the set of isometric types of such $X$ is dense in the Gromov-Hausdorff space   of the compact ultrametric spaces.\\\\
\textbf{2010 MSC:} 54E35, 37E25.\\\\
\textbf{Keywords:} finite ultrametric space, strictly binary tree, complete bipartite graph, spectrum of the ultrametric space, map preserving the balls, Gromov-Hausdorff metric.

\bigskip

\bigskip

\begin{center}{\LARGE
О неравенстве Гомори - Ху}
\\\bigskip
{\large Е. А. Петров и А. А. Довгошей}
\end{center}\bigskip

\begin{abstract}
В 1961 году Гомори и Ху доказали,  что для любого конечного непустого ультраметрического пространства $(X,d)$ выполняется неравенство $|\Sp(X)|\leqslant |X|-1$, где $\Sp(X)=\{d(x,y):x,y \in  X, x\neq y\}$. Мы  характеризуем пространства $X$, для которых достигается равенство, посредством структурных свойств  некоторых графов и показываем, что множество типов изометрий таких $X$ плотно в пространстве Громова-Хаусдорфа изометрических типов компактных ультраметрических пространств.\\\\
\textbf{2010 MSC:} 54E35, 37E25.\\\\
\textbf{Ключевые слова:} конечное ультраметрическое пространство, строго бинарное дерево, полный двудольный граф, спектр ультраметрического пространства,  отображение сохраняющее шары, метрика Громова-Хаусдорфа.
\end{abstract}

\section{Введение}
В 1961 году Гомори (Gomori)  и Ху (Hu) ~\cite{HomoriHu(1961)}, исследуя потоки в сетях, установили неравенство, которое на языке ультраметрических пространств может быть сформулировано следующим образом: если $(X,d)$~--- конечное непустое ультраметрическое пространство со спектром
$$
\Sp(X):=\{d(x,y):x,y \in  X, x\neq y\},
$$
то
\begin{equation}\label{eq1}
|\Sp(X)|\leqslant |X|-1.
\end{equation}
В ~\cite{HomoriHu(1961)} неравенство ~(\ref{eq1}) было получено при изучении остовных деревьев наименьшего веса в полных взвешенных графах. В настоящей работе после элементарного доказательства неравенства Гомори-Ху (теорема ~\ref{th1.3}) мы исследуем семейство $\mathfrak{U}$ конечных ультраметрических пространств $X$, для которых неравенство Гомори-Ху обращается в равенство. В частности:
 \begin{itemize}
   \item Описаны структурные свойства некоторых графов, связанных с конечными ультраметрическими пространствами $X$, логически эквивалентные принадлежности $X\in \mathfrak{U}$ (теоремы~\ref{th7*}  и ~\ref{th2.1}).
   \item Установлено, что корневые деревья, представляющие ультраметрические пространства $X,Y\in \mathfrak U$, изоморфны тогда и только тогда, когда существует сохраняющая шары биекция $F:X\to Y$  (теорема~\ref{th13*}).
   \item Показано, что классы изометрии пространств из $\mathfrak  U$ всюду плотны в пространстве Громова-Хаусдорфа, состоящем из классов изометрии компактных ультраметрических пространств (теорема ~\ref{th4.2}).
 \end{itemize}
 Напомним необходимые определения. \textit{Ультраметрикой}  на множестве $X$ называется функция \mbox{$d:X\times X\rightarrow \mathbb{R}^+$}, $\mathbb{R}^+=[0,\infty)$, такая, что при всех $x,y,z \in X$ выполнены соотношения:
\begin{itemize}
\item [(i)] $d(x,y)=d(y,x)$,
\item [(ii)] $(d(x,y)=0)\Leftrightarrow (x=y)$,
\item [(iii)] $d(x,y)\leq \max \{d(x,z),d(z,y)\}$.
\end{itemize}
Пара $(X,d)$ называется \emph{ультраметрическим пространством}. Неравенство (iii) часто называют \emph{сильным неравенством треугольника}. Функция $d:X\times X\rightarrow \mathbb{R}^+$, удовлетворяющая обычному неравенству треугольника $d(x,y)\leqslant d(x,z)+d(z,y)$ и свойствам (i)-(ii), называется {\it метрикой}, а пара $(X,d)$~--- \emph{метрическим пространством}. \emph{Диаметр} метрического пространства есть величина
$$
\diam X:=\sup\{d(x,y):x,y \in X\}.
$$
Двухэлементное подмножество $\{a,b\}$ множества $X$ является \emph{диаметральной парой} точек для $(X,d)$, если $d(a,b)=\diam X$. Мы будем рассматривать только те пространства $(X,d)$, для которых $X\neq \varnothing$.

Под \textit{графом} мы  понимаем пару $(V,E)$, состоящую из непустого множества $V$ и (возможно пустого) множества $E$, элементы которого есть неупорядоченные пары различных точек из $V$. Для графа $G=(V,E)$ множество $V=V(G)$ называется \textit{множеством вершин}, а $E=E(G)$ ~--- \textit{множеством рёбер}.

 Граф $H$ является {\it подграфом} графа $G$, $H\subseteq G$, если $V(H)\subseteq V(G)$ и $E(H)\subseteq E(G)$. Граф $G$ {\it конечен}, если $|V(G)|<\infty$. Если $E(G)=\varnothing$, то $G$~--- \textit{пустой граф}. Конечный непустой граф $P\subseteq G$ называется \textit{путём} (в $G$), если вершины из $P$ можно без повторений занумеровать в последовательность $(v_1,v_2,...,v_n)$ так, что $ (\{v_i,v_j\}\in E(P))\Leftrightarrow (|i-j|=1)$. Две вершины в графе \textit{связаны}, если существует соединяющий их путь. {\it Связный граф} — граф, в котором все вершины связаны.

\textit{Деревом} называется связный граф, не имеющий циклов.  Выбранная вершина дерева называется \emph{корнем дерева}. Дерево, содержащее такую вершину, называется \emph{корневым деревом}. Вершину дерева иногда называют \emph{узлом.} \emph{Уровень узла} — длина пути от корня до узла, $m$-й \emph{ярус} дерева~--- множество узлов дерева, на уровне $m$ от корня дерева. \emph{Потомками} данного узла будем называть все узлы последующего яруса, смежные с данным узлом. \emph{Лист} дерева~--- вершина дерева инцидентная с единственным ребром. \emph{$m$-арное дерево}~--- это дерево, в котором степени вершин не превосходят $m+1$. \emph{Внутренний узел}~--- узел дерева не являющийся листом. \emph{Помеченный граф} — граф, вершинам которого присвоены какие-либо метки, например, числа или символы какого-нибудь алфавита.

\emph{Строго бинарным деревом} (см. \cite[стр. 298]{Finch(2003)}) называется корневое дерево у которого корень  смежен с двумя узлами, а не корневые внутренние узлы смежны ровно с тремя узлами. Дерево, состоящее из одного узла, является строго бинарным деревом по определению.

Два графа называются \emph{изоморфными}, если существует взаимно-однозначное соответствие между их вершинами, которое сохраняет смежность.

Пусть $k$~--- некоторое кардинальное число. Непустой граф $G$ называется \emph{полным $k$-дольным}, если его вершины можно разбить на $k$ непересекающихся подмножеств $X_1,...,X_k$ так, что нет рёбер, соединяющих вершины одного и того же подмножества $X_i$ и две любые вершины из разных $X_i, X_j$, $1\leqslant i, j\leqslant k$ смежны. В этом случае пишем $G=G[X_1,...,X_k]$.

Понятия теории графов, которым не дано явное определение, можно найти в ~\cite{BonMur(2008)}.

\section{Неравенство Гомори-Ху и полные двудольные графы}

\begin{definition}\label{def1.1}
Пусть $(X,d)$~--- метрическое пространство со спектром $\Sp(X)$ и пусть $r\in \Sp(X)$. Через $G_{r,X}$ будем обозначать граф, для которого $V(G_{r,X})=X$ и
$$
(\{u,v\}\in E(G_{r,X}))\Leftrightarrow (d(u,v)=r).
$$
\end{definition}

При $r=\diam X$ граф $G_{r,X}$ будем обозначать через $G_d$ и называть \emph{диаметральным графом} пространства $(X,d)$.

В этом разделе мы приводим простое доказательство неравенства Гомори-Ху~(\ref{eq1})  и характеризуем те пространства $X$, для которых достигается равенство, с помощью структурных свойств графов  $G_{r,X}$.

Для конечного множества $X$ через $|X|$ будем обозначать количество его элементов.
\begin{theorem}[~\cite{DDP(P-adic)} ]\label{th1.1}
Пусть $(X,d)$~--- конечное ультраметрическое про\-стран\-ство с $|X|\geqslant 2$.  Тогда $G_d=[X_1,...,X_k]$, $k\geqslant 2$.
 \end{theorem}

Докажем неравенство Гомори-Ху.

\begin{theorem}\label{th1.3}
Пусть $(X,d)$~--- конечное ультраметрическое пространство, $n=|X|$. Тогда выполнено неравенство
\begin{equation}\label{eq1.1}
|\Sp(X)|\leqslant n-1.
\end{equation}
\end{theorem}

\begin{proof}
Проведём доказательство по индукции. При $n=1$ неравенство~(\ref{eq1.1}) очевидно. Пусть~(\ref{eq1.1}) выполнено при всех $n\leqslant m$, $m\in \mathbb N$ и пусть $|X|=m+1$. Согласно теореме~\ref{th1.1} $G_d=[X_1,...,X_k]$.

По предположению индукции для  ультраметрических пространств $(X_1,d),...,(X_k,d)$, выполнены неравенства
$$
|\Sp(X_1)|\leqslant |X_1|-1, \dots, |\Sp(X_k)|\leqslant |X_k|-1.
$$
Отметим, что $$\Sp(X)=\Sp(X_1)\cup\dots\cup\Sp(X_k)\cup\{\diam(X)\}.$$
 Тогда имеем
 \begin{equation}\label{eq1.2}
|\Sp(X)|\leqslant |X_1|-1+\cdots +|X_k|-1 +1=m+1-k+1.
 \end{equation}
 Учитывая, что $k\geqslant 2$ мы получаем~(\ref{eq1.1}) при $n=m+1$.
\end{proof}

\begin{definition}
Обозначим через $\mathfrak{U}$ класс конечных  ультраметрических пространств $X$, спектр которых содержит ровно $|X|-1$ элементов.
\end{definition}

Анализируя доказательство теоремы~\ref{th1.3} получаем следующее

\begin{corollary}\label{cor1.1}
Пусть $(X,d)$~--- конечное ультраметрическое пространство, $|X|\geqslant 2$. Если  $(X,d)\in \mathfrak{U}$, то $G_d$~--- полный двудольный граф.
\end{corollary}

\begin{lemma}\label{lem7'}
Пусть $(X,d)\in \mathfrak U$, $G_d=[X_1,X_2]$ и пусть $(X_1,d)$, $(X_2,d)$~--- подпространства $X$ с носителями $X_1$ и $X_2$. Тогда
 \begin{equation}\label{eq1.0*}
\Sp(X)= \Sp(X_1)\cup \Sp(X_2)\cup \{\diam X\}
 \end{equation}
и
 \begin{equation}\label{eq1.2*}
\Sp(X_1)\cap\Sp(X_2)=\varnothing .
 \end{equation}
\end{lemma}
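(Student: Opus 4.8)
The plan is to derive the set identity~(\ref{eq1.0*}) directly from the combinatorial structure of $G_d$, and then to obtain the disjointness~(\ref{eq1.2*}) from a counting argument that exploits the defining equality $|\Sp(X)|=|X|-1$ of the class $\mathfrak U$. The inequality~(\ref{eq1.1}) applied to the subspaces $X_1$ and $X_2$ will be the engine driving the second part.

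First I would prove~(\ref{eq1.0*}). Take any two distinct points $x,y\in X$. Since $X_1$ and $X_2$ are disjoint with union $X$, exactly one of three cases occurs: both points lie in $X_1$, both lie in $X_2$, or they lie in different parts. In the first two cases $d(x,y)$ belongs to $\Sp(X_1)$ or to $\Sp(X_2)$; in the third case $\{x,y\}\in E(G_d)$ because $G_d=[X_1,X_2]$ is complete bipartite, so $d(x,y)=\diam X$. This gives the inclusion $\Sp(X)\subseteq\Sp(X_1)\cup\Sp(X_2)\cup\{\diam X\}$, and the reverse inclusion is immediate since $X_1$ and $X_2$ are subspaces of $X$ and $\diam X$ is attained on any cross-pair. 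This is exactly the decomposition already used in the proof of Theorem~\ref{th1.3}, specialized to $k=2$.

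The key preparatory observation is that $\diam X\notin\Sp(X_1)\cup\Sp(X_2)$. Indeed, if $x,y$ are distinct points of the same part $X_i$, then $\{x,y\}$ is not an edge of $G_d$, so $d(x,y)\neq\diam X$; as $d(x,y)\leqslant\diam X$ always holds, we even get $d(x,y)<\diam X$. Hence every value in $\Sp(X_1)\cup\Sp(X_2)$ is strictly smaller than $\diam X$, and the singleton $\{\diam X\}$ is disjoint from both subspectra.

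The main step, and the place where membership in $\mathfrak U$ is essential, is~(\ref{eq1.2*}). Combining the preparatory observation with~(\ref{eq1.0*}) and inclusion-exclusion yields
\begin{equation*}
|\Sp(X)|=|\Sp(X_1)\cup\Sp(X_2)|+1=|\Sp(X_1)|+|\Sp(X_2)|-|\Sp(X_1)\cap\Sp(X_2)|+1.
\end{equation*}
On the other hand $(X,d)\in\mathfrak U$ gives $|\Sp(X)|=|X|-1=|X_1|+|X_2|-1$, while Theorem~\ref{th1.3} applied to each subspace gives $|\Sp(X_i)|\leqslant|X_i|-1$ for $i=1,2$ (this holds trivially also when $|X_i|=1$, where $\Sp(X_i)=\varnothing$). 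Substituting these bounds into the displayed equality forces
\begin{equation*}
|X_1|+|X_2|-1\leqslant |X_1|+|X_2|-1-|\Sp(X_1)\cap\Sp(X_2)|,
\end{equation*}
so $|\Sp(X_1)\cap\Sp(X_2)|\leqslant 0$, whence $\Sp(X_1)\cap\Sp(X_2)=\varnothing$. I expect the only real subtlety to be the bookkeeping: one must confirm that $\{\diam X\}$ contributes a genuinely new value, which is precisely what the preparatory observation guarantees so that the $+1$ is not silently absorbed. Once that is secured, any nonempty intersection of the two subspectra would strictly lower $|\Sp(X)|$ below $|X|-1$, contradicting $(X,d)\in\mathfrak U$.
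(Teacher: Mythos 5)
Your proof is correct and follows essentially the same route as the paper: derive~(\ref{eq1.0*}) from the complete bipartite structure of $G_d$, then combine $|\Sp(X)|=|X|-1$ with the Gomory--Hu bound $|\Sp(X_i)|\leqslant|X_i|-1$ to force equality in the count $|\Sp(X)|\leqslant|\Sp(X_1)|+|\Sp(X_2)|+1$, which yields the disjointness. The only (harmless) difference is that you establish $\diam X\notin\Sp(X_1)\cup\Sp(X_2)$ up front and use inclusion--exclusion, whereas the paper obtains all the pairwise disjointness at once from the sandwiched equality.
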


\begin{proof}
Равенство (\ref{eq1.0*}) следует из определения диаметрального графа и теоремы ~\ref{th1.1}. Это равенство влечёт неравенство
 \begin{equation}\label{eq1.1*}
|\Sp(X)|\leqslant|\Sp(X_1)|+|\Sp(X_2)|+1,
 \end{equation}
причём равенство в неравенстве достигается тогда и только тогда, когда выполнено~(\ref{eq1.2*}).
Так как $X\in \mathfrak{U}$, то $|\Sp(X)|=|X|-1$. Используя последнее равенство, неравенство Гомори-Ху и то, что
$$
X_1\cup X_2 = X,\quad X_1\cap X_2=\varnothing,
$$
находим
$$
|\Sp(X_1)|+|\Sp(X_2)|\leqslant |X_1|-1+|X_2|-1=|X|-2=|\Sp(X)|-1.
$$
Таким образом, выполнено неравенство обратное к~(\ref{eq1.1*}), а значит
\begin{equation}\label{eq1.3*}
|\Sp(X)|=|\Sp(X_1)|+|\Sp(X_2)|+1,
\end{equation}
что эквивалентно ~(\ref{eq1.2*}).
\end{proof}

Перейдём теперь к характеризации пространств $X\in \mathfrak{U}$ посредством графов $G_{r,X}$.

Пусть $G=(V,E)$~--- непустой граф, $V_0$~--- множество (возможно пустое) всех изолированных вершин графа $G$. Будем обозначать через $G'$ подграф графа $G$, порождённый множеством $V\backslash V_0$, т.е. $G'$~--- максимальный подграф графа $G$, не имеющий изолированных вершин.

\begin{lemma}\label{lem6*}
Пусть $G=(V,E)$~--- граф такой, что  $G'$ является полным двудольным и пусть $U\subseteq V$. Тогда, если индуцированный подграф $H=G(U)$ не пуст, то $H'$~--- полный двудольный граф.
\end{lemma}

Доказательство достаточно просто и мы оставляем его читателю.

\begin{theorem}\label{th7*}
Следующие утверждения равносильны для любого конечного ультраметрического пространства $(X,d)$ с $|X|\geqslant 2$.
\begin{itemize}
  \item [(i)] $(X,d) \in \mathfrak{U}$.
  \item [(ii)] Граф $G'_{r,X}$ является полным двудольным для любого $r\in \Sp(X)$.
\end{itemize}
\end{theorem}

\begin{proof}
Докажем  (i)$\Rightarrow$(ii) для любого ультраметрического $(X,d)$ с $2\leqslant |X|<\infty$. Доказательство проведём индукцией по $|X|$. Импликация  (i)$\Rightarrow$(ii), очевидно, имеет место при $|X|=2$. Предположим, что (i)$\Rightarrow$(ii) выполняется для всех конечных ультраметрических пространств $X$ с $2\leqslant |X|\leqslant n$. Пусть ультраметрическое пространство $(X,d)$ принадлежит $\mathfrak{U}$ и $|X|=n+1$. Докажем, что $G'_{r,X}$~--- полный двудольный граф для любого $r\in \Sp(X)$.

При $r=\diam X$ имеем $G_{r,X}=G_d$.  По следствию~\ref{cor1.1}, $G_{d}=[X_1,X_2]$. Следовательно  $G'_{r,X}=G'_{d}=G_d=G_{r,X}$ и всё доказано. Для доказательства того, что $G'_{r,X}$ является полным двудольным при $r<\diam X$, $r\in \Sp(X)$, воспользуемся леммой~\ref{lem7'}. Из~(\ref{eq1.0*}) и~(\ref{eq1.2*}) получаем, что утверждение
\begin{itemize}
  \item ($r\in \Sp(X_1)$ и $r\notin \Sp(X_2)$) или ($r\notin \Sp(X_1)$ и $r\in \Sp(X_2)$)
\end{itemize}
выполнено для любого $r\in \Sp(X)\backslash \{\diam X\}$. Не уменьшая общности, можно считать, что $r\in \Sp(X_1)$. Проверим принадлежность $X_1\in \mathfrak{U}$. Из ~(\ref{eq1.0*}) и~(\ref{eq1.2*}) следует равенство~(\ref{eq1.3*}).
Используя это равенство, равенство $|\Sp(X)|=|X|-1$ и неравенство Гомори-Ху для $X_1$ и $X_2$, получим
$$
|\Sp(X)|-1=|\Sp(X_1)|+|\Sp(X_2)|\leqslant |X_1|+|X_2|-2=|X|-2=|\Sp(X)|-1.
$$
Следовательно, $|\Sp(X_1)|+|\Sp(X_2)|=|X_1|+|X_2|-2$, что возможно только, если
$$
|\Sp(X_1)|=|X_1|-1 \, \text{ и } \, |\Sp(X_2)|=|X_2|-1.
$$
Таким образом, $(X_1,d)\in \mathfrak{U}$. Теперь так как $|X_1|<|X|$ и $r\in \Sp(X_1)$, то мы можем воспользоваться предположением индукции в соответствии с которым  $G'_{r,X_1}$~--- полный двудольный граф. Так как $r \notin \Sp(X_2)$ и $d(u,v)=\diam X>r$ при любых $u\in X_1$, $v\in X_2$, то $G'_{r,X_1}=G'_{r,X}$.  Таким образом, $G'_{r,X}$~--- полный двудольный граф для всех $r\in \Sp(X)$.

Импликация (i)$\Rightarrow$(ii) установлена.

Проверим (ii)$\Rightarrow$(i) для любого конечного ультраметрического пространства $(X,d)$ с $|X|\geqslant 2$.  Как и выше будем использовать индукцию по $|X|$. Для любого двухточечного $X$ мы, очевидно, имеем равенство в неравенстве Гомори-Ху. Следовательно, (ii)$\Rightarrow$(i) при $|X|=2$. Предположим, что (ii)$\Rightarrow$(i) выполнено при всех ультраметрических $X$ с $2\leqslant |X| \leqslant n$, $n$~--- фиксированное натуральное число. Пусть $(X,d)$~--- ультраметрическое пространство, $|X|=n+1$ и $G'_{r,X}$~--- полный двудольный граф для любого $r\in \Sp(X)$. Проверим, что $(X,d)\in \mathfrak{U}$. Пусть $G_d$~--- диаметральный граф пространства $(X,d)$. По теореме~\ref{th1.1} $G_d$~--- полный $k$-дольный. Следовательно $G'_d=G_d$ и в соответствии с предположением индукции $k=2$, т.е. $G_{d}=[X_1,X_2]$. Покажем, что $X_1, X_2 \in \mathfrak{U}$. Рассмотрим подпространство $X_1$. Если $|X_1|=1$, то всё очевидно. Для $|X_1|\geqslant 2$ спектр $\Sp(X_1)\neq \varnothing$. Если $r\in \Sp(X_1)$, то $r\in \Sp(X)$, а значит $G'_{r,X}$~--- полный двудольный граф. Пусть $G_{r,X}(X_1)$~--- подграф графа $G_{r,X}$, порождённый множеством $X_1$. Легко видеть, что $G_{r,X}(X_1)=G_{r,X_1}$. Так как $r\in \Sp(X_1)$, то $G_{r,X_1}$~--- не пустой граф. По лемме~\ref{lem6*} $G'_{r,X}(X_1)$~--- полный двудольный, т.е. $G'_{r,X_1}$~--- полный двудольный для любого $r\in \Sp (X_1)$. Так как $|X_1|<|X|$, то по предположению индукции $(X_1,d)\in \mathfrak{U}$. Аналогично доказывается, что $(X_2,d)\in \mathfrak{U}$. Проверим равенство~(\ref{eq1.2*}). Предположим, что
$$
\Sp(X_1)\cap\Sp(X_2)\neq\varnothing.
$$
Тогда для $r\in \Sp(X_1)\cap\Sp(X_2)$ множества $V(G'_{r,X})\cap X_1$ и $V(G'_{r,X})\cap X_2$ являются непустыми и образуют разбиение множества $V(G'_{r,X})$. Так как для любых $x_1\in X_1$ и $x_2\in X_2$ выполнено равенство $d(x_1,x_2)=\diam X$ и $\diam X>r$, то граф $G'_{r,X}$ является несвязным. Это противоречит (ii), так как любой полный двудольный граф связен. Равенство~(\ref{eq1.0*}), следующее из $G_d=G[X_1,X_2]$, вместе с~(\ref{eq1.2*})  даёт~(\ref{eq1.3*}). Выше было показано, что
$$
|\Sp(X_1)|=|X_1|-1 \, \text{ и } \, |\Sp(X_2)|=|X_2|-1.
$$
Подставляя эти равенства в ~(\ref{eq1.3*}) находим
$$
|\Sp(X)|=|X_1|-1+|X_2|-1+1=|X|-1,
$$
т.е. $X\in \mathfrak{U}$.

Таким образом, (ii)$\Rightarrow$(i) для любого конечного ультраметрического пространства $X$ с $|X|\geqslant 2$.
\end{proof}

Доказанная теорема и лемма~\ref{lem6*} дают
\begin{corollary}\label{cor8*}
Пусть $X\in \mathfrak{U}$ и $Y$~--- непустое подпространство $X$. Тогда $Y\in \mathfrak{U}$.
\end{corollary}

\section{Равенство в неравенстве Гомори-Ху и строго бинарные деревья}

\indent В этом разделе мы будем рассматривать только те ультраметрические пространства $(X,d)$, для которых $X\cap \Sp(X)=\varnothing$ (чего всегда можно добиться, переходя к пространству изометричному $X$).

Поставим каждому конечному ультраметрическому пространству $(X,d)$ в соответствие помеченное корневое $m$-арное дерево $T_X$ по следующему правилу. Если $X=\{x\}$~--- одноточечное множество, то $T_X$~--- дерево, состоящее из одного узла $\{x\}$, которое мы считаем строго бинарным по определению. Пусть $|X|\geqslant 2$. Корень дерева пометим меткой $\diam X$. Пусть $G_d$~--- диаметральный граф пространства $(X,d)$, $G_d=G[X_1,...,X_k]$. В этом случае будем считать, что дерево $T_X$ имеет $k$ узлов первого яруса с метками
\begin{equation}\label{eq1.6}
l_i:=
\begin{cases}
\diam X_i, &\text{если } |X_i|\geqslant 2\\
x,  &\text{если } X_i \text{~--- одноточечное множество} \\
&\text{с единственным элементом } x,
\end{cases}
\end{equation}
$i=1,...,k$. Узлы первого яруса, помеченные метками $x\in X$, будут листьями, а метками $\diam X_i$~--- внутренними узлами дерева $T_X$. Если на первом ярусе внутренних узлов нет, то дерево $T_X$ построено. В противном случае, повторяя описанную выше процедуру с $X_i\subset X$, соответствующими внутренним узлам первого яруса, получаем узлы второго яруса и т.д. Так как $|X|$ конечно, а величины $|Y|$, $Y\subseteq X$, строго убывают при движении вдоль любого пути, стартующего из корня, то на каком-то из ярусов все вершины будут листьями и построение $T_X$ завершается.

Построенное выше помеченное дерево $T_X$ будем называть \emph{представляющим деревом пространства} $(X,d)$. Отметим, что каждый элемент $x\in X$ приписан какому-то листу, а все внутренние узлы помечены метками $r\in \Sp(X)$. При этом разным листьям соответствуют разные $x\in X$, но различные внутренние узлы могут иметь совпадающие метки.
\begin{remark}
Представление конечного ультраметрического пространства $X$  с помощью помеченного корневого дерева $T_X$ известно. В~\cite{GurVyal(2012)} такое представление называется \emph{каноническим} и связывается со структурой позиционный игры, в которой игроки передвигаются из начальной позиции (корня дерева) к конечным (листьям дерева). Применение  диаметральных графов для построения $T_X$ выглядит новым.
\end{remark}
Корневое дерево, получающееся из $T_X$ путём ``стирания меток'', будем обозначать через $\overline{T}_X$. У неизометричныx пространств $X$ и $Y$ могут быть изоморфные деревья $\overline{T}_X$ и $\overline{T}_Y$.

\begin{example}\label{ex1}
На рис. 1 дерево $T_X$ является представляющим деревом  для пространства $(X,d)$, а дерево $T_Y$~--- представляющим деревом для пространства $(Y,\rho)$. Легко видеть, что пространства $(X,d)$ и $(Y,\rho)$ неизометричны, но $\Sp(X)=\Sp(Y)$ и $\overline{T}_X$ изоморфно $\overline{T}_Y$.
\begin{figure}[ht]\label{fig1}
\begin{center}
\includegraphics[width=0.8\linewidth]{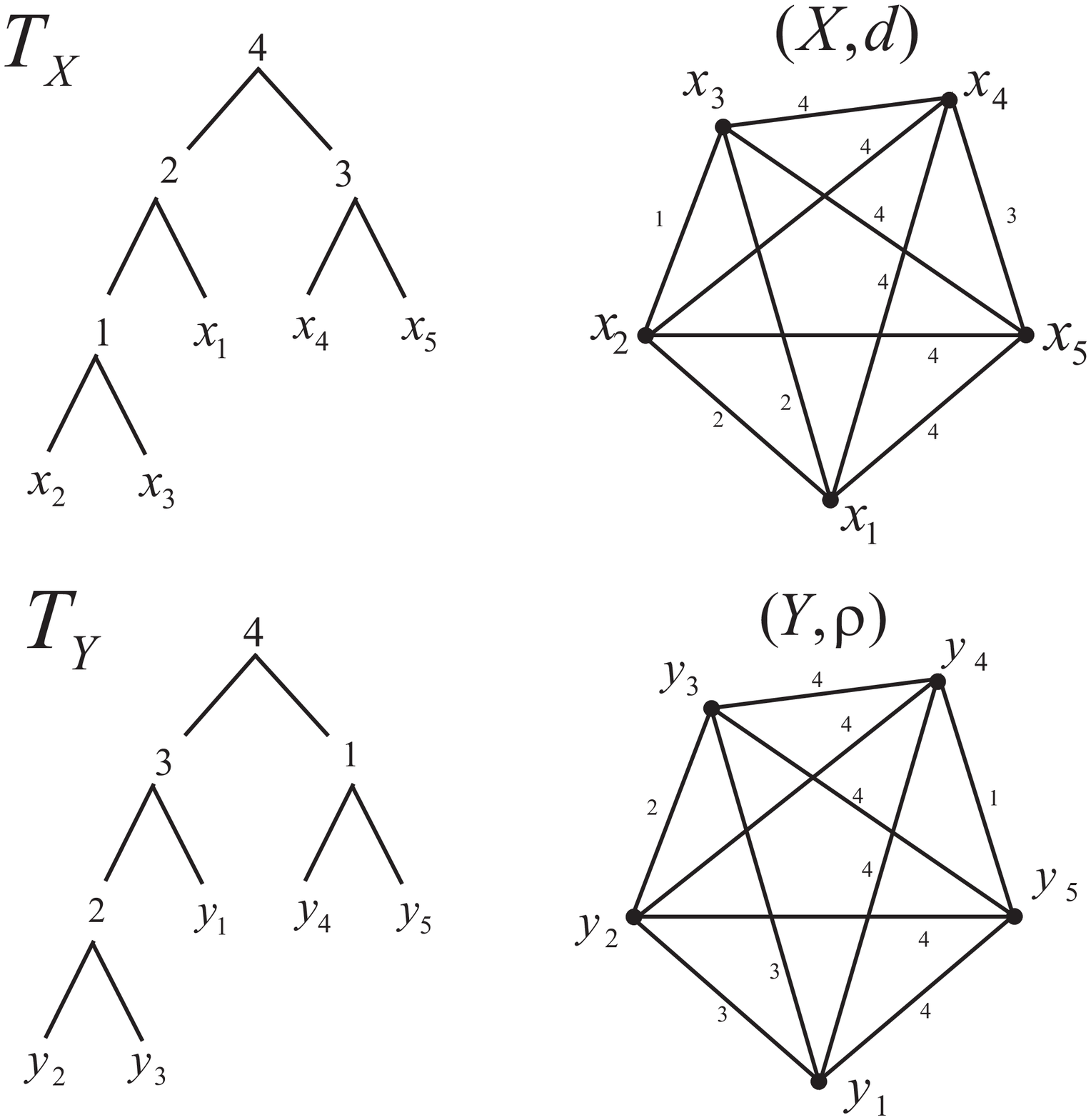}
\end{center}
\caption{Деревья неизометричных пространств могут быть изоморфны.}
\end{figure}
\end{example}

\begin{lemma}\label{lem1.2}
Конечное $m$-арное корневое дерево, имеющее $n$ листьев, $n\geqslant 2$, у которого количество потомков каждого внутреннего узла не менее двух, является строго бинарным тогда и только тогда, когда количество внутренних узлов равно $n-1$.
\end{lemma}

\begin{proof}
Покажем, что для любого конечного строго бинарного дерева с $n$  листьями количество внутренних узлов равно $n-1$. При $n=2$ имеем бинарный граф $T_2$, у которого один корень (внутренний узел) и два потомка (листа). Стартуя с $T_2$, любое строго бинарное дерево можно получить последовательным добавлением двух потомков одному из листьев. Такая процедура увеличивает на единицу как количество листьев, так и количество внутренних узлов. Разница между количество листьев и количеством узлов остаётся равной единице.

Пусть имеем конечное $m$-арное дерево с $n$ листьями и $n-1$ внутренним узлом, у которого количество потомков каждого внутреннего узла не менее двух. Покажем, что оно является строго бинарным. Пусть $T$~--- $m$-арное дерево, удовлетворяющее таким условиям, не является строго бинарным. Очевидно, любое конечное $m$-арное дерево можно получить последовательным добавлением не менее двух потомков одному из листьев, стартуя с одноточечного графа. Считаем, что в одноточечном графе корень является листом, а внутренние узлы отсутствуют.  Т.к. $T$~--- не строго бинарное дерево, то на каком-то этапе к некоторому узлу было добавлено не менее $k$ потомков, $k\geqslant 3$. Такая процедура увеличивает количество внутренних узлов на единицу и количество листьев на $k-1$. В этом случае разница между количеством листьев и количеством внутренних узлов изменилась в большую сторону. В  дальнейшем при добавлении ровно двух потомков такая разница не меняется, а при добавлении трёх и более может только увеличиться. Т.е. такое дерево $T$ не может иметь $n$ листьев и $n-1$  внутренних узлов.
\end{proof}

Сформулируем теперь новое необходимое и достаточное условие равенства в неравенстве Гомори-Ху.

\begin{theorem}\label{th2.1}
Пусть $(X,d)$~--- конечное ультраметрическое про\-стран\-ство с $|X|\geqslant 2$. $(X,d)\in \mathfrak{U}$ тогда и только тогда, когда представляющее  дерево $T_X$ пространства $(X,d)$ является строго бинарным и метки различных внутренних узлов  различны.
\end{theorem}

\begin{proof}
Пусть представляющее  дерево $T_X$ пространства $(X,d)$, $|X|=n$, является строго бинарным и метки, приписанные различным внутренним узлам, являются различными. Покажем, что $(X,d)\in \mathfrak{U}$. По построению, данное бинарное дерево имеет $n$ листьев.  Согласно лемме~\ref{lem1.2} внутренних узлов всего $n-1$.  Так как различные внутренние узлы имеют разные метки, а любая метка есть элемент множества $\Sp (X)$, то $|\Sp (X)|\geqslant n-1$. Неравенство, обратное к этому, есть неравенство Гомори-Ху. Что даёт $|\Sp(X)|=n-1$.

Обратно, пусть $(X,d)\in \mathfrak{U}$ и $|X|=n$. Тогда $G_d=G[X_1,X_2]$ и в соответствии с леммой~\ref{lem7'}
 \begin{equation}\label{eq8*}
\Sp (X_1)\cap \Sp (X_2) = \varnothing.
 \end{equation}
Равенство~(\ref{eq8*}) ведёт к тому что метки, различных внутренних узлов, различны. По индукции легко доказать, что  $T_X$ имеет $n-1$ внутренний узел и $n$ листьев. По лемме~\ref{lem1.2} $T_X$ является строго бинарным.
\end{proof}

\begin{remark}\label{rem14}
При построении представляющих деревьев $T_X$ для $(X,d)\in \mathfrak U$ каждое число из $\Sp(X)$ используется в качестве метки ровно один раз.
\end{remark}

\begin{corollary}\label{cor14*}
Пусть $(X,d)$~--- конечное ультраметрическое про\-стран\-ство с $|X|\geqslant 2$, для которого диаметральный граф $G_d$ является полным двудольным, $G_d=[X_1,X_2]$. Если $(X_i,d)\in \mathfrak{U}$, $i=1,2$, и $\Sp(X_1)\cap \Sp(X_2)=\varnothing$, то $(X,d)\in \mathfrak{U}$.
\end{corollary}

Следствие ~\ref{cor14*} может, до определённой степени, рассматриваться как обращение следствия ~\ref{cor8*}.

В 1948 году Оттером (Otter) ~\cite{Otter(1948)} было  подсчитано точное число $B_k$ не изоморфных конечных строго бинарных деревьев с $k+1$ листьями:
   \begin{equation}\label{eq3.1}
B_k=\begin{cases}
\frac{B_i(B_i+1)}{2}+\sum\limits_{j=0}^{i-1}B_{k-j-1}B_j, & \text{ если }\, \ k=2i+1,\\
\sum\limits_{j=0}^{i-1}B_{k-j-1}B_j, & \text{ если }\, \ k=2i.
\end{cases}
\end{equation}
Непосредственно можно найти начальные значения
\begin{equation}\label{eq9*}
B_1=B_2=1,\quad  B_3=2.
\end{equation}

Пусть $X$ и $Y$~--- конечные ультраметрические пространства. Будем писать $X\cong Y$, если корневые деревья $\overline{T}_X$ и $\overline{T}_Y$ изоморфны. В теореме~\ref{th16} будет показано, что число ультраметрических пространств $X$ из $\mathfrak U$, имеющих не изоморфные деревья $\overline{T}_X$, тоже может быть подсчитано по формуле~(\ref{eq3.1}).

Ниже мы отождествляем узлы дерева $T_X$ с приписанными им метками (см. замечание~\ref{rem14}).

\begin{lemma}\label{lem14}
Пусть $(X,d) \in \mathfrak{U}$ и пусть $x_1$, $x_2$~--- два различных листа дерева $T_X$. Тогда, если $(x_1,v_1,...,v_n,x_2)$~--- путь, соединяющий листья $x_1$ и $x_2$ в $T_X$, то
\begin{equation}\label{eq10}
d(x_1,x_2)=\max_{1\leqslant i\leqslant n} v_i.
\end{equation}
\end{lemma}

\begin{proof}
 Справедливость леммы легко установить индукцией по $|X|$. Формула~(\ref{eq10}) очевидно выполнена при $|X|=2$. Кроме того,~(\ref{eq10}) выполнена, если $\{x_1, x_2\}$~--- диаметральная пара пространства $(X,d)$, так как в этом случае путь $(x_1, v_1,...,v_n,x_2)$ проходит через корень дерева $T_X$, помеченный меткой $\diam X$. Если $\{x_1, x_2\}$ не является диаметральной парой, то $x_1$ и $x_2$ лежат в одной и той же компоненте дополнения диаметрального графа и можно применить предположение индукции (не сформулированное здесь явно).
\end{proof}

\begin{remark}\label{rem16} Если в формуле ~(\ref{eq10}) $\max\limits_{1\leqslant i \leqslant n} v_i =v_{i_0}$ и $v_{i_0}$~--- узел яруса $m_0$, а $v_{i_1}\neq v_{i_0}$~---внутренний узел дерева $T_X$, принадлежащий пути $(x_1, v_1,...,v_n, x_2)$ и лежащий на ярусе $m_1$, то $m_1>m_0$. В работе~\cite{GurVyal(2012)} это обстоятельство используется для \emph{построения} ультраметрического пространства, соответствующего заданному помеченному корневому дереву. Лемма~\ref{lem14} доказывает, что рассматриваемые нами представляющие деревья и деревья, реализующие каноническое представление в~\cite{GurVyal(2012)}, совпадают для пространств $(X,d)\in \mathfrak{U}$.
\end{remark}

Следующая лемма следует из теоремы 2 работы ~\cite{GurVyal(2012)} и теоремы~\ref{th2.1} настоящей работы.

\begin{lemma}\label{lem15}
Для любого конечного строго бинарного дерева $T$  найдётся ультраметрическое пространство $(X,d)\in \mathfrak{U}$ такое, что графы $T$ и $\overline{T}_X$ изоморфны.
\end{lemma}

Теперь мы можем использовать упомянутый выше результат Оттера для подсчёта числа пространств $(X,d)\in \mathfrak U$ с неизоморфными $\overline{T}_X$.

\begin{theorem}\label{th16}
Пусть $B_k$~--- число классов эквивалентности (относительно отношения $\cong$) ультраметрических пространств $(X,d)\in \mathfrak U$, для которых $|X|=k+1$. Тогда для чисел $B_k$ выполняется рекуррентное соотношение~(\ref{eq3.1}) с начальными значениями~(\ref{eq9*}).
\end{theorem}

\begin{proof}
Временно обозначим через $\tilde{B}_k$ число не изоморфных строго бинарных деревьев, имеющих $k+1$ листьев. В силу теоремы~\ref{th2.1}, $B_k\leqslant \tilde{B}_k$, где $B_k$~--- число всех неизоморфных строго бинарных деревьев, представляющих $(X,d)\in \mathfrak U$ с $|X|=k+1$. Лемма ~\ref{lem15} влечёт обратное неравенство $\tilde{B}_k \leqslant  B_k$. Следовательно $B_k =  \tilde{B}_k$  и мы можем использовать соотношение Оттера~(\ref{eq3.1}). Выполнение начальных условий~(\ref{eq9*}) проверяется непосредственно.
\end{proof}
Эквивалентность $X\cong Y$ может быть описана в чисто метрических терминах без использования представляющих деревьев $T_X$ и $T_Y$. Это будет сделано в теореме~\ref{th13*}.

Напомним, что в метрическом пространстве $(X,d)$ замкнутым шаром радиуса $r$ с центром в точке $t\in X$ называется множество
$$
B_r(t)=\{x\in X:d(x,t)\leqslant r\}.
$$
Для каждого $t\in X$ положим $\Sp_t(X):=\{d(x,t):x\in X, x\neq t\}$. Обозначим через $\textbf{B}_X$ множество всех шаров $B_r(t)$ с $r\in \Sp_t(X)$, т.е.
$$
\textbf{B}_X:=\{B_r(t):t\in X, r\in \Sp_t(X)\}.
$$
\begin{definition}\label{def12*}
Пусть $X$ и $Y$~--- метрические пространства. Будем говорить, что  отображение $F:X\to Y$ сохраняет шары, если для любых $Z\in \textbf{B}_X$ и $W\in \textbf{B}_Y$ выполнены соотношения
\begin{equation}\label{eq9}
F(Z)\in \textbf{B}_Y \,\text{ и } \, F^{-1}(W)\in \textbf{B}_X,
\end{equation}
где $F(Z)$~--- образ множества $Z$ при отображении $F$ и $F^{-1}(W)$~---  прообраз множества $W$ при этом отображении.
\end{definition}

Приведём пример отображения, сохраняющего шары.

\begin{example}
Пусть $(X,d)$~--- ультраметрическое пространство с $|X|\geqslant 2$ и $f:\Sp (X)\to (0,\infty)$~--- произвольная строго возрастающая функция. Положим
$$
d_f(x,y):=
\begin{cases}
0, &\text{если }x=y\\
f(d(x,y)), &\text{если } x\neq y.
\end{cases}
$$
Тогда $(X,d_f)$ тоже является ультраметрическим пространством и тождественное отображение $\mathrm{id}:(X,d)\to (X,d_f)$  сохраняет шары.
\end{example}

\begin{lemma}\label{lem20}
Пусть $(X,d)\in \mathfrak U$. Тогда для любого $r\in \Sp (X)$ множество $\textbf{B}_X$ содержит ровно один шар $B_r$ радиуса $r$.
\end{lemma}

\begin{proof}
При $|X|\leqslant 2$ утверждение очевидно. Предположим оно выполнено при $1\leqslant |X|\leqslant k$ с фиксированным $k\geqslant 2$. Рассмотрим $(X,d)\in \mathfrak U$ с $|X|=k+1$. Если $r=\diam X$, то единственный шар $B_r\in \textbf{B}_X$~--- это само пространство $X$. Пусть теперь $r\in \Sp(X)$, $r<\diam X$ и $B_r\in \textbf{B}_X$. Как и в лемме ~\ref{lem7'} рассмотрим подпространства $(X_1,d)$ и $(X_2,d)$ пространства $X$, такие, что $G_d=G[X_1,X_2]$. Используя~(\ref{eq1.0*}) и ~(\ref{eq1.2*}) мы, не уменьшая общности, можем предположить, что $r\in \Sp(X_1)\backslash \Sp(X_2)$. Пусть $x_1$~--- произвольная точка $X_1$, для которой $r\in \Sp_{x_1}(X)$. Положим
$$
B_r^1(x_1):=\{x\in X_1:d(x,x_1)\leqslant r\}.
$$
Для доказательства единственности шара $B_r$ достаточно установить равенство
\begin{equation}\label{eq12'}
B_r^1(x_1)=B_r.
\end{equation}
Сначала покажем, что
\begin{equation}\label{eq13'}
B_r\subseteq X_1.
\end{equation}
Действительно, если $B_r(t)\subseteq X_2\backslash X_1$, то $r\in \Sp (X_2)$, что противоречит принадлежности $r\in \Sp(X_1)\backslash \Sp(X_2)$. Предположим, что
$$
X_1\cap B_r(t)\neq \varnothing \neq X_2\cap B_r(t).
$$
Тогда существуют $y_1\in X_1 \cap B_r$ и $y_2\in X_2\cap B_r$. Так как в ультраметрическом пространстве диаметр шара не превосходит его радиуса, то $d(y_1,y_2)\leqslant r$. А так как $\{y_1,y_2\}$~--- диаметральная пара пространства $X$, то $d(y_1,y_2)=\diam X$. Следовательно $r\geqslant \diam X$, что противоречит условию $r<\diam X$ и доказывает ~(\ref{eq13'}). Включение ~(\ref{eq13'}) показывает, что $B^1_r(x_1)$, $B_r\in \textbf{B}_{X_1}$. Используя следствие~\ref{cor8*}, убеждаемся в том, что $X_1\in \mathfrak{U}$. Так как $|X_1|<|X|$, то $|X_1|\leqslant k$. По предположению индукции $\textbf{B}_{X_1}$ содержит единственный шар радиуса $r\in \Sp(X_1)$, что и доказывает равенство~(\ref{eq12'}).
\end{proof}

\begin{lemma}\label{lem21'}
Пусть $(X,d)\in \mathfrak U$ и пусть  $u$, $v$, $w$~--- различные внутренние узлы графа $T_X$ помеченные метками $r_u$, $r_v$, $r_{w}\in \Sp(X)$. Тогда узлы $v$ и $w$ являются потомками узла $u$ в том и только том случае, если
\begin{equation}\label{eq14*}
B_{r_u}=B_{r_v}\cup B_{r_w},
\end{equation}
где $B_{r_u}, B_{r_v}$ $B_{r_w}$~--- шары из $\textbf{B}_{X}$, имеющие радиусы $r_u$, $r_v$ и $r_w$ соответственно.
\end{lemma}

\begin{proof}
В процессе построения дерева $T_X$, описанном в начале настоящего раздела, носитель $X$ пространства $(X,d)\in \mathfrak U$ последовательно разбивается на пары непересекающихся подмножеств до тех пор пока в результате не получаются одноточечные множества. Например, при $G_d=G[X_1,X_2]$ множество  $X$ разбивается на $X_1$ и $X_2$. В свою очередь $X_1$, при $\diam X_1>0$, разбивается на $X_{1,1}$  и $X_{1,2}$; $X_2$, при $\diam X_2>0$ на $X_{2,1}$ и $X_{2,2}$  и т.д. Причём в качестве меток внутренних узлов дерева $T_X$ выступают числа $\diam X$, $\diam X_1$, $\diam X_2$, $\diam X_{1,1}$,... (если они не являются нулями).

Для доказательства настоящей леммы достаточно установить, что любое подмножество ненулевого диаметра из последовательности  $X$,  $X_1$,  $X_2$,  $X_{1,1}$, $\dots$ принадлежит $\textbf{B}_X$. Пусть $\diam X_i>0$, $i=1,2$. Проверим, что \begin{equation}\label{eq18t}
X\in \textbf{B}_X,\quad X_1\in \textbf{B}_X\, \text{ и }\,  X_2\in \textbf{B}_X.
\end{equation}
Включение $X\in \textbf{B}_X$ выше. Пусть $\{x_1, x_2\}$~--- диаметральная пара пространства $X_1$ и $r:=\diam X_1$. Проверим, что
\begin{equation}\label{eq15s}
B_{r}(x_1)=X_1.
\end{equation}
Действительно, если $x_3$~--- произвольная точка $X_1$, то $\diam X_1\geqslant d(x_1,x_3)$, т.е. $x_3\in B_r(x_1)$. Следовательно
\begin{equation}\label{eq16s}
X_1\subseteq B_{r}(x_1).
\end{equation}
Пусть $x\in X\backslash X_1$. Тогда $x\in X_2$ и, по определению диаметрального графа,
$$
d(x,x_1)=\diam X.
$$
Из неравенства $\diam X>\diam X_1$ следует, что $x\notin B_r(x_1)$. Таким образом, $X_1\supseteq B_r(x_1)$, что вместе с~(\ref{eq16s}) даёт~(\ref{eq15s}). Так как $B_r(x_1)\in \textbf{B}_X$, то  $X_1\in \textbf{B}_X$. Аналогично проверяется, что $X_2\in \textbf{B}_X$.

Простая модификация рассуждений, проведённых при доказательстве равенства~(\ref{eq15s}) показывает, что импликация
\begin{equation}\label{eq19t}
(Y\in \textbf{B}_X, \quad Z\in \textbf{B}_Y)\Rightarrow (Z\in \textbf{B}_X).
\end{equation}
справедлива для произвольных $Y, Z\subseteq X$. Теперь доказательство того, что все элементы последовательности $X$, $X_1$, $X_2$, $X_{1,1}$ $\dots$ принадлежат $\textbf{B}_X$ легко завершается. Например, заменяя в~(\ref{eq18t}) $X$ на $X_1$, a $X_1$ на $X_{1,1}$ получим $X_{1,1}\in \textbf{B}_{X_1}$. Таким образом, посылка импликации~(\ref{eq19t}) выполнена с $Y=X$ и $Z=X_{1,1}$. Следовательно $X_{1,1}\in \textbf{B}_X$ и т.д..
\end{proof}

\begin{theorem}\label{th13*}
Пусть $(X,d)$, $(Y,\rho) \in \mathfrak{U}$. Эквивалентность $X\cong Y$ выполняется тогда и только тогда, когда существует биективное отображение $F:X\to Y$, сохраняющее шары.
\end{theorem}
\begin{proof} Предположим, что $X\cong Y$. Пусть $V_X = V(\overline{T}_X)$ и $V_Y = V(\overline{T}_Y)$~--- множества вершин графов $\overline{T}_X$ и $\overline{T}_Y$ соответственно. Эквивалентность $X\cong Y$ означает, что существует биективное отображение $\Psi:V_X\to V_Y$, сохраняющее отношение смежности между вершинами графов $\overline{T}_X$ и $\overline{T}_Y $. Множество $V(\overline{T}_X)$ ($V(\overline{T}_Y)$) находится в естественном взаимно однозначном соответствии с множеством меток, которыми были помечены вершины графа $T_X (T_Y)$ (см. замечание~\ref{rem14}). Для простоты будем отождествлять эти множества.  Биекция $\Psi$ отображает множество листьев графа $\overline{T}_X$ (= множество $X$) на множество листьев графа $\overline{T}_Y$ (= множество $Y$), так как листья ~--- это в точности вершины степени 1. Обозначим через $\Phi$ сужение $\Psi$  на $X$, $\Phi=\Psi|_{X}$, и покажем, что биективное отображение $\Phi:X\to Y$, рассматриваемое как отображение между метрическими пространствами $(X,d)$ и $(Y,\rho)$, сохраняет шары. Доказательство проведём индукцией по $|X|$. Сохранение шаров очевидно при $|X|\leqslant 2$. Предположим, что $\Phi$ сохраняет шары если $|X|\leqslant n$, $n$~--- фиксированное натуральное число $\geqslant 2$. Докажем, что шары сохраняются и при $|X|=n+1$.

Положим $r_1:=\diam X$ и $q_1:=\diam Y$. В пространстве $(X,d)$ существует единственный шар, принадлежащий $\textbf{B}_X$ и имеющий радиус $r_1$~--- это само множество $X$. Аналогично $Y$~--- единственный шар из $\textbf{B}_Y$, имеющий радиус $q_1$. Так как
$$
\Phi(X)=Y \,\text{ и } \, \Phi^{-1}(Y)=X,
$$
то всё доказано. Пусть теперь $B_r\in \textbf{B}_X$, $r\in \Sp(X)$ и $r<\diam X$. Для того, чтобы показать справедливость принадлежности $\Phi (B_r)\in \textbf{B}_Y$ будем использовать предположение индукции. Заметим, что $\Psi(r_1)=q_1$ так как $r_1$~--- единственная вершина степени $2$ в $\overline{T}_X$, а $q_1$~--- единственная вершина степени $2$  в $\overline{T}_Y$. Пусть $r_{11}$ и $r_{12}$~--- две вершины первого яруса в $\overline{T}_X$, a $q_{11}$ и $q_{12}$ две вершины первого яруса в $\overline{T}_Y$. Так как $\overline{T}_X$ и $\overline{T}_Y$ строго бинарные деревья, то других вершин первого яруса в $\overline{T}_X$ и $\overline{T}_Y$ нет. Отображение $\Psi$ переводит смежные вершины в смежные, следовательно, не уменьшая общности, можно считать, что $\Psi(r_{11})=q_{11}$ и $\Psi(r_{12})=q_{12}$. Если удалить из $\overline{T}_X$ вершину $r_1$, a из $\overline{T}_Y$ вершину $q_1$, то $\overline{T}_X$ распадается на два строго бинарных дерева $\overline{T}^{1}_X $ и $\overline{T}^{2}_X$. Аналогично, удаляя из $\overline{T}_Y$ вершину $q_1$ получим строго бинарные деревья $\overline{T}^{1}_Y $ и $\overline{T}^{2}_Y$. Деревья $\overline{T}^{1}_X$ и $\overline{T}^{2}_X$ не имеют общих узлов и мы можем предполагать, что, выбранный ранее узел $r$, принадлежит $V(\overline{T}^{1}_X)$ и $\Psi(V(\overline{T}^{1}_X))=V(\overline{T}^{1}_Y)$. Пусть $X_1$, $X_2$, $Y_1$, $Y_2$~--- множества листьев деревьев $\overline{T}^{1}_X$, $\overline{T}^{2}_X$, $\overline{T}^{1}_Y$ и $\overline{T}^{2}_Y$ соответственно. Заметим, что
\begin{equation}\label{eq12}
|X_1|\leqslant n.
\end{equation}
Сужение $\Psi|_{V(\overline{T}^{1}_X)}$ является изоморфизмом строго бинарных деревьев $\overline{T}^{1}_X$ и $\overline{T}^{1}_Y$. Рассмотрим теперь подпространство $(X_1,d)$ пространства $(X,d)$ и подпространство $(Y_1,\rho)$ пространства $(Y,\rho)$. Отождествляя вершины графов $T_{X_1}$ и $T_{X_2}$ с их метками, видим, что граф $\overline{T}_{X_1}$ совпадает с графом $\overline{T}^{1}_X$, а граф $\overline{T}_{Y_1}$ с $\overline{T}^{1}_Y$. Следовательно $\Psi|_{V(\overline{T}^{1}_X)}$~--- изоморфизм графов $\overline{T}_{X_1}$ и $\overline{T}_{Y_1}$. Неравенство~(\ref{eq12}) и предположение индукции показывают, что функция $\Psi|_{X_1}$ сохраняет шары как отображение из $(X_1,d)$ в $(Y_1,\rho)$. Пусть $B_r(x_1)\in \textbf{B}_{X}$, где $r$~--- число из $\Sp(X)\backslash \{r_1\}$ выбранное выше. Так как $\Sp(X_1)\cap \Sp(X_2)=\varnothing$ и $r\in \Sp(X_1)$, то $x_1\in X_1$. Если $x_2$~--- произвольная точка из $X_2$, то $d(x_1,x_2)=\diam X=r_1>r$. Следовательно
\begin{equation}\label{eq13}
B_r(x_1)\subseteq X_1,
\end{equation}
a значит $B_r(x_1)\in \textbf{B}_{X_1}$. Следовательно $\Psi|_{X_1}(B_r(x_1))$~--- шар в $Y_1$. Включение~(\ref{eq13}) даёт равенство
$$
\Psi|_{X_1}(B_r(x_1))=\Phi(B_r(x_1)).
$$
Кроме того, непосредственно из определений графов $\overline{T}^{1}_{X}$, $\overline{T}^{2}_{X}$ и $\overline{T}^{1}_{Y}$, $\overline{T}^{2}_{Y}$ следует, что
$$
\Phi(X_1)=\Psi(X_1)=Y_1,\quad \Phi(X_2)=\Psi(X_2)=Y_2, \quad Y_1\cap Y_2=\varnothing.
$$
Таким образом, $\Phi(B_r(x_1))\subseteq Y_1$, а значит $\Phi(B_r(x))$~--- шар в $Y$. Аналогично доказывается, что $\Phi^{-1}(B_q(y))\in \textbf{B}_X$ для $y\in Y$ и $q\in \Sp_y(X)\backslash\{\diam Y\}$. Таким образом, отображение $\Phi:X\to Y$ сохраняет шары.

Пусть теперь $(X,d)$, $(Y,\rho)\in \mathfrak{U}$ и $\Phi:X\to Y$ сохраняющая шары биекция. Нужно доказать, что $X\cong Y$. Для этого продолжим отображение $\Phi:X\to Y$ до отображения $\Phi_*:V(T_X)\to V(T_Y)$, где $V(T_X)$ и $V(T_Y)$~--- множества вершин строго бинарных деревьев, представляющих пространства $X$ и $Y$ соответственно, и покажем, что отображение $\Phi_*$, сохраняет отношение смежности. На множестве листьев дерева $T_X$ отображение $\Phi_*$ считаем совпадающим с $\Phi$. Для определения $\Phi_*$ на множестве внутренних узлов заметим, что отображение
\begin{equation}\label{eq15*}
\textbf{B}_X\ni Z\mapsto \Phi(Z)\in \textbf{B}_Y
\end{equation}
является биекцией множества $\textbf{B}_X$ на множество $\textbf{B}_Y$. (Последнее легко вытекает из того, что $F$ биективно и сохраняет шары). По лемме~\ref{lem20} отображение~(\ref{eq15*}) порождает биективное отображение между $\Sp(X)$ и $\Sp(Y)$
\begin{equation}\label{eq16*}
\Sp(X)\ni r \mapsto B_r \mapsto \Phi(B_r)\mapsto \diam \Phi(B_r)\in \Sp(Y),
\end{equation}
где $B_r$~--- единственный шар, принадлежащий $\textbf{B}_X$ и имеющий радиус (= диаметр) $r$. Так как метки $r$, соответствующие различным внутренним узлам дерева $T_X$, различны и различны метки внутренних узлов дерева $T_Y$, то ~(\ref{eq16*}) порождает естественную биекцию между внутренними узлами этих деревьев, которая и осуществляет продолжение $\Phi$ до $\Phi_*$. (Таким образом, если $v$~--- внутренний узел дерева $T_X$, имеющий метку $r\in \Sp(X)$, то $\Phi_*(v)$~--- внутренний узел дерева $T_Y$, помеченный меткой $\diam \Phi(B_r)$, которую задаём с помощью~(\ref{eq16*})).

Для завершения доказательства осталось проверить, что отображение $\Phi_*:V(T_X)\to V(T_Y)$ сохраняет отношение смежности. Пусть $v_1^*$ и $v_2^*$~--- смежные вершины графа $T_X$. Не уменьшая общности, можно считать, что узел $v_2^*$ является потомком узла $v_1^*$. Тогда $v_1^*$~--- внутренний узел. Так как $T_X$~--- строго бинарное дерево, то любой внутренний узел имеет ровно два потомка. Пусть $v_3^*$~--- потомок $v_1^*$ отличный от $v_2^*$. Возможны следующие три случая:
\begin{itemize}
  \item [(i)] узлы $v_2^*$ и $v_3^*$~--- внутренние;
  \item [(ii)] один из узлов $v_2^*$, $v_3^*$ является внутренним, а другой узел~--- лист;
  \item [(iii)] оба узла $v_2^*$ и $v_3^*$ являются листьями.
\end{itemize}
Допустим, что имеет место случай (i). Докажем, что двухэлементные множества $\{\Phi_*(v_1^*),\Phi_*(v_2^*)\}$ и $\{\Phi_*(v_1^*),\Phi_*(v_3^*)\}$~--- ребра графа $T_Y$. Пусть $r_1^*$, $r_2^*$, $r_3^*$~--- метки узлов $v_1^*$, $v_2^*$, $v_3^*$. Тогда по лемме~\ref{lem21'} имеем
\begin{equation}\label{eq17*}
B_{r_1^*}=B_{r_2^*}\cup B_{r_3^*},
\end{equation}
где $B_{r_i^*}\in \textbf{B}_X$ шар радиуса $r_i^*$, $i=1, 2, 3$. Равенство~(\ref{eq17*}) влечёт равенство
\begin{equation}\label{eq18*}
\Phi(B_{r_1^*})=\Phi(B_{r_2^*})\cup \Phi(B_{r_3^*}).
\end{equation}
Узлы $v_i^y := \Phi_*(v_i^*)$ помечены в $T_Y$ метками $\diam \Phi(B_{r_i^*})$, $i=1,2,3$. В силу леммы~\ref{lem21'} равенство~(\ref{eq18*}) влечёт смежность $v_1^y$ и $v_2^y$ и смежность $v_1^y$ и $v_3^y$.

Аналогично можно доказать смежность $\Phi_*(v_1^*)$ и $\Phi_*(v_2^*)$, если $v_2^*$~--- лист или листом является $v_3^*$. Мы не будем делать это подробно отметим только, что, если, например, $w$~--- лист, а $v$~--- внутренний узел, то ~(\ref{eq14*}) остаётся верным после замены $B_{r_w}$ на одноточечное множество, единственная точка которого это тот элемент из $X$, который использован в качестве метки для $w$.

Аналогично проверяем, что, если $u_1^*$ и $u_2^*$~--- смежные вершины графа $T_Y$, то $\Phi_*^{-1}(u_1^*)$ и $\Phi_*^{-1}(u_2^*)$~--- смежные вершины в $T_X$.

Таким образом, если существует сохраняющая шары биекция $\Phi:X\to Y$, то можно построить изоморфизм $\Phi_*:V(\overline{T}_X)\to V(\overline{T}_Y)$, т.е. $X\cong Y$.
\end{proof}

\begin{remark}\label{rem25}
Теорема~\ref{th13*} остаётся справедливой, если вместо пространств $X, Y\in \mathfrak U$ рассматривать произвольные конечные ультраметрические пространства.
\end{remark}

В заключение приведём ещё один результат близкий к неравенству Гомори-Ху
\begin{theorem}\label{th26*}
Пусть $(X,d)$~--- конечное ультраметрическое пространство. Тогда
\begin{equation}\label{eq27p}
|\textbf{B}_X|\leqslant |X|-1
\end{equation}
причём равенство в этом неравенстве имеет место тогда и только тогда, когда представляющее дерево $T_X$ пространства $X$ является строго бинарным.
\end{theorem}

Неравенство Гомори-Ху $|\Sp(X)|\leqslant |X|-1$ легко вывести из~(\ref{eq27p}).  Достаточно заметить, что шары из $\textbf{B}_X$, имеющие различные радиусы, не могут совпадать, а значит $|\Sp(X)|\leqslant |\textbf{B}_X|$.

\section{Изометрические типы пространств из $\mathfrak{U}$ в пространстве Громова-Хаусдорфа}

Для любого метрического пространства $(X,d)$ через $\is (X)$ будем обозначать изометрический тип этого пространства, т.е. $\is (X)$~--- это класс всех метрических пространств изометричных $(X,d)$. В этом разделе мы установим плотность множества $\{\is(X):(X,d)\in \mathfrak U\}$ в пространстве Громова-Хаусдорфа изометрических типов компактных ультраметрических пространств  и покажем, что в пространстве Громова-Хаусдорфа, порождённом конечными ультраметрическими пространствами $(X,d)$ с $|X|\leqslant n$ множество $\{\is(X):(X,d)\in \mathfrak U, |X|=n\}$ является открытым и всюду плотным. Таким образом любые конечные ультраметрические пространства ``малыми возмущениями'' переводятся в пространства из $\mathfrak U$, которые являются ``устойчивыми'' при таких возмущениях.

Для удобства напомним определение расстояния по Громову-Хаусдорфу (см., например, ~\cite[стр. 254]{Burago(2001)}).

\begin{definition}\label{def4.1}
Пусть $(X,d_X)$ и $(Y,d_Y)$~--- ограниченные метрические пространства  и пусть $\varepsilon>0$. Расстояние Громова-Хаусдорфа $d_{GH}(\is(X),\is(Y))$ меньше чем $\varepsilon$ тогда и только тогда, когда существует метрическое пространство $(Z,d_Z)$ c подпространствами $X'$ и $Y'$, для которых $\is(X')=\is(X)$, $\is(Y')=\is(Y)$ и
\begin{equation}\label{eq4.1}
X'\subseteq \bigcup\limits_{y\in Y'}O_{\varepsilon}(y)\, \text{ и } \, Y'\subseteq \bigcup\limits_{x\in X'}O_{\varepsilon}(x),
\end{equation}
где $O_{\varepsilon}(t)=\{z\in Z:d_Z(t,z)<\varepsilon \}$~--- открытый шар из $(Z,d_Z)$ с центром в точке $t$ и радиуса $\varepsilon$.
\end{definition}

Величина $d_{GH}$ удовлетворяет неравенству треугольника~\cite[стр. 255]{Burago(2001)},  неотрицательна и если $(X,d_X)$ и $(Y,d_Y)$ компактны, то
$$
(d_{GH}(\is(X),\is(Y))=0)\Leftrightarrow (\is X = \is Y).
$$
Таким образом, $d_{GH}$~--- метрика на множестве изометрических типов компактных метрических пространств (~\cite[стр. 259, теорема 7.3.30]{Burago(2001)}).

\begin{theorem}\label{th4.2}
Множество $\{\is(X):X\in \mathfrak U\}$ является плотным в пространстве изометрических типов компактных ультраметрических пространств, наделённом метрикой Громова-Хаусдорфа $d_{GH}$.
\end{theorem}

Для доказательства теоремы~\ref{th4.2} заметим, что если подмножество $Y$ метрического пространства $(X,d)$ является $\varepsilon$-сетью (в $X$), то $d_{GH}(\is(X),\is(Y))\leqslant \varepsilon$. Так как для компактного $(X,d)$ конечная $\varepsilon$-сеть существует при любом $\varepsilon>0$, то достаточно установить следующее

\begin{statement}\label{st27}
Пусть $(X,d)$~--- конечное ультраметрическое пространство. Тогда для любого $\varepsilon>0$ найдётся $(Y,\rho)\in \mathfrak U$ такое, что $|X|=|Y|$ и
\begin{equation}\label{eq4.2}
d_{GH}(\is(X),\is(Y))<\varepsilon.
\end{equation}
\end{statement}

\begin{proof}
Занумеруем точки множества $X$ в последовательность $x_1,...,x_n$, $n=|X|$. Пусть $\mathbb{R}_{\infty}^{n}$~--- линейное пространство векторов $(t_1,...,t_n)$ с нормой $\max\limits_{1\leqslant i\leqslant n}|t_i|$. Каждой точке $z\in X$ поставим в соответствие функцию $\delta_z:X\to \mathbb R$, определённую равенством $\delta_z(x_n)=d(x_n,z)$, $x_n\in X$. Мы будем рассматривать эту функцию как вектор из $\mathbb R_{\infty}^n$ с координатами $d(x_1,z)$,...,$d(x_n,z)$. Отображение  $X\ni z\mapsto \delta_z \in \mathbb R_{\infty}^n$ является изометрическим вложением пространства $(X,d)$ в $\mathbb R_{\infty}^n$ (см., например, \cite[стр. 14, теорема 1.5.1]{Searcoid(2007)}). Этот факт и приведённое выше определение~\ref{def4.1} показывают, что для доказательства утверждения  достаточно найти ультраметрику $\rho_{\varepsilon}$ на $X$ так, что $(X,\rho_{\varepsilon})\in \mathfrak U$ и
\begin{equation}\label{eq4.3}
|d(x,y)-\rho_{\varepsilon}(x,y)|<\varepsilon
\end{equation}
при всех $x,y\in X$.

Будем строить искомую ультраметрику $\rho_{\varepsilon}$ индукцией по $|X|$. Такое построение легко осуществить при $|X|\leqslant 2$. Пусть $n\geqslant 2$~--- фиксированное натуральное число. Предположим, что для любого $\varepsilon >0$  и любого ультраметрического пространства $(X,d)$  с $|X|\leqslant n$ найдётся ультраметрика $\rho_{\varepsilon}$ на $X$, для которой $(X,\rho_{\varepsilon})\in \mathfrak U$ и ~(\ref{eq4.3}) выполнено для всех $x,y \in X$. Рассмотрим произвольное ультраметрическое пространство $(X,d)$ с $|X|=n+1$. По теореме~\ref{th1.1} диаметральный граф пространства $(X,d)$ является полным $k$-дольным с $k\geqslant 2$. Возможны следующие два случая: (i) $k=2$, (ii) $k\geqslant 3$.

Вначале проведём построение $\rho_{\varepsilon}$ в случае (i). Пусть $G_d=G[X_1,X_2]$. Так как $|X_i|\leqslant n$, то по предположению индукции для любого $\varepsilon>0$  на $X_i$ существует ультраметрика $\rho_{\varepsilon}^i$ такая, что $(X_i,\rho_{\varepsilon}^i)\in \mathfrak U$ и
\begin{equation}\label{eq4.4}
|d(x,y)-\rho_{\varepsilon}^i(x,y)|<\frac{\varepsilon}{4}
\end{equation}
для всех $x,y\in X_i$ и $i=1, 2$. Выберем $\varepsilon $  в~(\ref{eq4.4}) так, чтобы дополнительно выполнялось неравенство
\begin{equation}\label{eq4.5}
\frac{\varepsilon}{2}<\min\limits_{\substack{r,t \in\Sp(X) \\ r\neq t}}|r-t| \wedge \min\limits_{r\in \Sp(X)} r.
\end{equation}
Предположим теперь, что на $X_1$ можно ввести новую ультраметрику $\bar{\rho}^1_{\varepsilon}$ так, что для всех $x,y\in X_1$
\begin{equation}\label{eq4.6}
|\rho_{\varepsilon}^1(x,y)-\bar{\rho}_{\varepsilon}^1(x,y)|<\frac{\varepsilon}{4}
\end{equation}
 и
\begin{equation}\label{eq4.7}
\Sp(X_1,\bar{\rho}^1_{\varepsilon})\cap\Sp(X_2,\rho^2_{\varepsilon})=\varnothing
\end{equation}
и
\begin{equation}\label{eq4.8}
(X_1,\bar{\rho}_{\varepsilon}^1)\in \mathfrak U.
\end{equation}
Определим функцию $\rho_{\varepsilon}:X\times X\to \mathbb R^+$ по правилу
\begin{equation}\label{eq4.9}
\rho_{\varepsilon}(x,y):=
\begin{cases}
\diam X, &\text{если }\, x\in X_i, y\in X_j, i\neq j,\\
\bar{\rho}^1_{\varepsilon}, &\text{если }\, x,y\in X_1,\\
\rho^2_{\varepsilon}, &\text{если }\, x,y\in X_2,
\end{cases}
\end{equation}
где $\diam X=\max\limits_{x,y\in X}d(x,y)$. Покажем, что $\rho_{\varepsilon}$~--- ультраметрика на $X$. В проверке нуждается только сильное неравенство треугольника, остальные характеристические свойства очевидны. Пусть $x,y,z\in X$ докажем, что
\begin{equation}\label{eq4.10}
\rho_{\varepsilon}(x,y)\leqslant \max\{\rho_{\varepsilon}(x,z),\rho_{\varepsilon}(y,z)\}.
\end{equation}
Достаточно рассмотреть~(\ref{eq4.10}) при $x,y\in X_1$ и $z\in X_2$ или при $x,y\in X_2$ и $z\in X_1$, т.к. в остальных случаях~(\ref{eq4.10}) следует непосредственно из~(\ref{eq4.9}). Пусть $x,y\in X_1$ и $z\in X_2$. Используя~(\ref{eq4.9}) имеем
\begin{equation}\label{eq4.10'}
\max(\rho_{\varepsilon}(x,z),\rho_{\varepsilon}(y,z))=\diam X,
\end{equation}
а из~(\ref{eq4.9}), ~(\ref{eq4.6})  и ~(\ref{eq4.4}) следует, что
\begin{equation}\label{eq4.11}
\rho_{\varepsilon}(x,y)=\bar{\rho}_{\varepsilon}^1(x,y)\leqslant \frac{\varepsilon}{4}+\rho_{\varepsilon}^1(x,y)<\frac{\varepsilon}{2}+d(x,y).
\end{equation}
Так как  $x,y \in X_1$, то $\{x, y\}$ не является диаметральной парой для $(X,d)$ и, используя ~(\ref{eq4.5}), получим
$$
\diam X>d(x,y)+\frac{\varepsilon}{2}.
$$
Таким образом,
\begin{equation}\label{eq4.12}
\rho_{\varepsilon}(x,y)<\diam X
\end{equation}
для всех $x,y\in X_1$. Неравенство ~(\ref{eq4.12}) и равенство~(\ref{eq4.10'}) доказывают~(\ref{eq4.10}). Аналогично можно установить~(\ref{eq4.12}) при $x,y\in X_2$, что доказывает~(\ref{eq4.10}) при $z\in X_1$ и $x,y\in X_2$. Таким образом, $\rho_{\varepsilon}$~--- ультраметрика на $X$. То что ~(\ref{eq4.12}) имеет место при всех $x,y\in X_i$, $i=1,2$ приводит к совпадению диаметральных графов пространств $(X,d)$ и $(X,\rho_{\varepsilon})$. Следовательно, $G_{\rho_{\varepsilon}}$~--- полный двудольный. Из этого свойства,~(\ref{eq4.7}), ~(\ref{eq4.8}) и того, что $(X_2,\rho_{\varepsilon}^2)\in \mathfrak U$ по следствию~\ref{cor14*} вытекает принадлежность $(X,\rho_{\varepsilon})\in \mathfrak U$. Заметим теперь, что ~(\ref{eq4.9}), ~(\ref{eq4.12}),~(\ref{eq4.6}) и ~(\ref{eq4.4}) приводит к неравенству ~(\ref{eq4.3}) при всех $x, y\in X$. Таким образом, $\rho_{\varepsilon}$, определённая формулой~(\ref{eq4.9}), обладает желаемыми свойствами, если существует $\bar{\rho}^{1}_{\varepsilon}$, для которой выполняются соотношения ~(\ref{eq4.6})~-- (\ref{eq4.8}).

Покажем как построить ультраметрику $\bar{\rho}^1_{\varepsilon}$, удовлетворяющую~(\ref{eq4.6})~-- (\ref{eq4.8}). Если
\begin{equation}\label{eq4.13}
\Sp(X_1,\rho^1_{\varepsilon})\cap \Sp(X_2,\rho^2_{\varepsilon})=\varnothing,
\end{equation}
то достаточно взять $\bar{\rho}_{\varepsilon}^1=\rho_{\varepsilon}^1$. Пусть~(\ref{eq4.13}) не выполнено. Выберем
\begin{equation}\label{eq4.14}
\Delta = \min\limits_{\substack{r,t\in \Sp^1\\ r\neq t}} |r-t|\wedge \min\limits_{r \in \Sp^1}r \wedge\frac{\varepsilon}{4},
\end{equation}
где $\Sp^1=\Sp(X_1,\rho_{\varepsilon}^1)$. При таком выборе
\begin{equation}\label{eq4.15}
(r-\Delta, r)\cap (t-\Delta,t)=\varnothing, \quad r \geqslant \Delta > 0
\end{equation}
для любых различных $r,t\in \Sp^1$. Пусть $f:\Sp^1\to (0,\infty)$~--- произвольная функция, для которой
\begin{equation}\label{eq4.16}
f(r)\in (r-\Delta,r)\backslash \Sp(X_2,\rho^2_{\varepsilon}).
\end{equation}
Из~(\ref{eq4.15}) следует, что
\begin{equation}\label{eq4.17}
(r<t)\Rightarrow (r-\Delta<f(r)<t-\Delta<f(t))
\end{equation}
для всех $r, t\in \Sp^1$. Таким образом, $f$ строго возрастает. Определим $\bar{\rho}^1_{\varepsilon}:X_1\times X_1\to \mathbb R^+$ как
\begin{equation}\label{eq4.18}
\bar{\rho}^1_{\varepsilon}(x,y)=
\begin{cases}
0, &\text{если } \, x=y\\
f(\rho^1_{\varepsilon}(x,y)), &\text{если } \, x\neq y.
\end{cases}
\end{equation}
Так как $(X_1,\rho^1_{\varepsilon})\in \mathfrak U$ и $f$~--- строго возрастающая функция, то $\bar{\rho^1_{\varepsilon}}$~--- ультраметрика на $X_1$ и $(X_1,\bar{\rho^1_{\varepsilon}})\in \mathfrak U$. В силу~(\ref{eq4.16}) и ~(\ref{eq4.18}) имеем
$$
|\rho^1_{\varepsilon}(x,y)-\bar{\rho^1_{\varepsilon}}(x,y)|<\Delta.
$$
А так как по~(\ref{eq4.14}) $\Delta\leqslant \frac{\varepsilon}{4}$, то~(\ref{eq4.6}) выполняется для всех $x,y \in X$. Осталось заметить, что~(\ref{eq4.7}) следует непосредственно из~(\ref{eq4.16}) и~(\ref{eq4.18}).

Пусть теперь имеет место случай (ii), $G_d=G[X_1, X_2,...,X_k]$ с $k\geqslant 3$. Для того, чтобы найти ультраметрику $\rho_{\varepsilon}$, для которой $(X,\rho_{\varepsilon})\in \mathfrak U$ и~(\ref{eq4.3}) выполняется при всех $x,y \in X$, достаточно построить ультраметрику $d_{\varepsilon}$ на $X$ так, чтобы граф $G_{d_{\varepsilon}}$ был полным двудольным и неравенство
\begin{equation}\label{eq4.19}
|d(x,y)-d_{\varepsilon}(x,y)|\leqslant \varepsilon
\end{equation}
имело место при всех  $x, y\in X$. После этого искомая $\rho_{\varepsilon}$ строится как в случае (i).

Положим
$$
Y:=\bigcup\limits_{i=2}^k X_i, \quad r:=\max t,\quad  t\in \Sp (X)\backslash \{\diam X\},
$$
где при $\Sp(X)=\{\diam X\}$, считаем $r:=0$.
Пусть $d^{'}\in (r, \diam X)$ и
\begin{equation}\label{eq4.20}
|\diam X-d^{'}|<\varepsilon.
\end{equation}
Определим $d_{\varepsilon}:X\times X\to \mathbb R^+$ правилом
\begin{equation}\label{eq4.21}
d_{\varepsilon}(x,y)=
\begin{cases}
d(x,y), &\text{если } \, d(x,y)\neq \diam X,\\
\diam X, &\text{если } \, x\in X_1 \text{ и } y \in Y \text{ или } x\in Y \text{ и } y\in X_1,\\
d^{'}, &\text{если } \, x, y\in Y \text{ и } d(x,y)=\diam X.
\end{cases}
\end{equation}
Непосредственно из~(\ref{eq4.21}) получаем
$$
(d_{\varepsilon}(x,y)\neq d(x,y))\Rightarrow ((d_{\varepsilon}(x,y)=d^{'} \, \& \, d(x,y)=\diam X)),
$$
что вместе с~(\ref{eq4.20}) даёт ~(\ref{eq4.19}). Убедимся теперь в том, что $d_{\varepsilon}$~--- ультраметрика. Для этого достаточно проверить сильное неравенство треугольника
\begin{equation}\label{eq4.22}
d_{\varepsilon}(x,y)\leqslant \max \{d_{\varepsilon}(x,z), d_{\varepsilon}(z,y)\}
\end{equation}
при всех $x, y, z\in X$.
Так как $d|_{X_1\times X_1}=d_{\varepsilon}|_{X_1\times X_1}$, то~(\ref{eq4.22}) выполнено при $x, y, z\in X_1$. Функция
\begin{equation*}
f:\Sp(X)\to (0,\infty), \quad f(t)=
\begin{cases}
t, &\text{если } \, t \neq \diam X,\\
d^{'}, &\text{если } \, t=\diam X,
\end{cases}
\end{equation*}
строго возрастает, а из~(\ref{eq4.21}) при всех $x, y \in Y$ легко получить равенство
$$
d_{\varepsilon}|_{Y\times Y}(x,y)=f(d|_{Y\times Y})(x,y).
$$
Следовательно $d_{\varepsilon}|_{Y\times Y}$~-- порождает ультраметрику на $Y$ и значит~(\ref{eq4.22}) выполнено при $x,y,z \in Y$. Осталось доказать~(\ref{eq4.22}) при
\begin{equation}\label{eq4.23}
\{x, y, z\}\cap X_1 \neq \varnothing \neq \{x, y, z\}\cap Y.
\end{equation}
Если~(\ref{eq4.23}) выполнено, то из~(\ref{eq4.21}) легко получить равенство
$$
\max\{d_{\varepsilon}(x,z),d_{\varepsilon}(y,z)\}=\diam X.
$$
Кроме того, непосредственно из~(\ref{eq4.21}) следует, что
$$
\max\limits_{x,y\in X}d_{\varepsilon}(x,y) = \diam X.
$$
Таким образом, $d_{\varepsilon}$~--- ультраметрика на $X$. Осталось проверить, что $G_{d_{\varepsilon}}$~--- полный двудольный граф. Это почти очевидно т.к. в соответствии с~(\ref{eq4.21}) $\{x,y\}$~--- диаметральная пара  для $(X, d_{\varepsilon})$ тогда и только тогда, когда $x\in X_1$ и $y\in Y$ или $x\in Y$ и $y\in X_1$.
\end{proof}

Утверждение~\ref{st27} может быть усилено следующим образом.

\begin{statement}
Для любого целого положительного $n$ множество $I_{\mathfrak U}^n:=\{\is (X): X\in \mathfrak U, |X|=n\}$ является плотным открытым подмножеством пространства изометрических типов ультраметрических пространств $(X,d)$ с $|X|\leqslant n$, наделённом метрикой Громова-Хаусдорфа.
\end{statement}

\begin{proof}
Проверим открытость множества $I_{\mathfrak U}^n$. Пусть $(X,d)\in \mathfrak U$, $|X|=n$. Нужно найти такое $\varepsilon$, что для любого ультраметрического $(Y,\rho)$ с $|Y|\leqslant n$ из неравенства
\begin{equation}\label{eq4.24}
d_{GH}(\is (X),\is(Y))<\varepsilon
\end{equation}
следует принадлежность
\begin{equation}\label{eq4.25}
(Y,\rho)\in \mathfrak U
\end{equation}
и равенство $|Y|=n$.  Пусть
\begin{equation}\label{eq4.26}
0<\varepsilon< \frac{1}{4}(\min\limits_{\substack{r,t\in \Sp(X)\\ r\neq t}} |r-t|\wedge \min\limits_{r \in \Sp(X)}r).
\end{equation}
В соответствии с определением~\ref{def4.1} найдётся метрическое пространство $(Z,d_Z)$ такое, что для некоторых $X^1, Y^1\subseteq Z$ выполняются равенства $\is(X^1)=\is(X)$, $\is(Y^1)=\is(Y)$ и
\begin{equation}\label{eq4.26*}
X^1\subseteq \bigcup\limits_{y\in Y^1} O_{\varepsilon} (y),
\end{equation}
где $O_{\varepsilon}(y)=\{z\in Z:d_Z(z,y)<\varepsilon\}$. Заметим, что, если $x_1, x_2$~--- разные точки из $X^1$, a $y_1$, $y_2$~--- точки из $Y^1$ такие, что $x_i \in O_{\varepsilon} (y_i)$, $i=1, 2$, то $y_1\neq y_2$. Действительно, если $y_1=y_2$, то
$$
d_Z(x_1,x_2)<d_Z(x_1,y_1)+d_Z(y_1,x_2)<2\varepsilon,
$$
что противоречит~(\ref{eq4.26}). Следовательно $|X^1|\leqslant |Y^1|$. А так как $|X^1|=|X|=n\geqslant |Y|=|Y^1|$, то имеем равенство  $|Y|=n$. Осталось доказать~(\ref{eq4.25}). Так как $(X,d)\in \mathfrak U$, то для этого нужно проверить равенство
$|\Sp(Y^1)|=|\Sp(X^1)|$.  Неравенство $|\Sp(Y^1)|\leqslant |\Sp(X^1)|$ вытекает из неравенства Гомори-Ху, поэтому достаточно установить неравенство $|\Sp(Y^1)|\geqslant|\Sp(X^1)|$. Последнее неравенство эквивалентно выполнению для всех $y_1, y_2, y_3, y_4 \in Y^1$ импликации
\begin{equation}\label{eq4.27}
(d_Z(y_1,y_2)=d_Z(y_3,y_4))\Rightarrow (d_Z(x_1,x_2)=d_Z(x_3,x_4)),
\end{equation}
где $x_i$~--- единственная точка из $X^1$, лежащая в шаре $O_{\varepsilon}(y_i)$, $i=1, 2, 3, 4$. (Эта единственность фактически уже доказана выше). Пусть в~(\ref{eq4.27}) $d_Z(y_1,y_2)=d_Z(y_3,y_4)$, но  $d_Z(x_1,x_2)>d_Z(x_3,x_4)$. Тогда, используя неравенство треугольника и~(\ref{eq4.26}), находим
\begin{gather*}
\min\limits_{\substack{r,t\in \Sp\\ r\neq t}} |r-t|\leqslant d_Z(x_1,x_2)-d_Z(x_3,x_4)\\
\leqslant d_Z(x_1,y_1)+d_Z(y_1,y_2)+d_Z(y_2,x_2)\\ -(d_Z(y_3,y_4)-d_Z(x_3,y_3)-d_Z(x_4,y_4))=\sum\limits_{i=1}^n d(x_i,y_i)<4\varepsilon,
\end{gather*}
что противоречит~(\ref{eq4.26}). Аналогично можно показать, что и неравенство $d_Z(x_3,x_4)>d_Z(x_1,x_2)$ противоречит~(\ref{eq4.26}). Таким образом, импликация~(\ref{eq4.27}) выполняется для всех $y_1, y_2, y_3, y_4 \in Y^1$.

Покажем, что $I_{\mathfrak{U}}^n$~--- плотное множество. Для этого нужно установить, что для любого $\varepsilon>0$ и любого конечного ультраметрического $(Y,\rho)$ с $|Y|\leqslant n$ существует $(X,d)\in \mathfrak U$ такое, что
\begin{equation}\label{eq4.28}
|X|=n \, \text{ и } \, d_{GH}(\is X, \is Y)<\varepsilon.
\end{equation}
Если $|Y|=n$, то  существование $(X,d)\in \mathfrak U$, удовлетворяющего~(\ref{eq4.28}), следует из утверждения~\ref{st27}. Пусть $m:=n-|Y|>0$. Выберем $y_1,...,y_m$ так, что
$$
\{y_1,...,y_m\}\cap Y=\varnothing
$$
и зафиксируем точку $y_0\in Y$. Пусть $\varepsilon$ удовлетворяет неравенству \begin{equation}\label{eq4.29}
0<\varepsilon<\inf\{\rho(x,y):x,y\in Y, x\neq y\},
\end{equation}
где при $|Y|=1$ полагаем  $\inf \varnothing =+\infty$. Пусть $Z:=Y\cup\{y_1,...,y_m\}$. Определим на $Z$ ультраметрику $\rho_{\varepsilon}$ как ``$\frac{\varepsilon}{2}$-раздутие'' точки $y_0$,
\begin{equation}\label{eq4.30}
\rho_{\varepsilon}(x,y)=
\begin{cases}
\rho(x,y), &\text{если } \, x,y\in Y,\\
\rho(x,y_0), &\text{если } \, x\in Y \text{ и } y \in \{y_0,...,y_m\} \\
&\text{или } y\in Y \text{ и } x \in \{y_0,...,y_m\},\\
\frac{\varepsilon}{2}(1-\delta_{x,y}), &\text{если } \, x, y\in \{y_0,...,y_m\},
\end{cases}
\end{equation}
где $\delta_{x,y}$~--- символ Кронекера $\delta_{x,y}=1$ при $x=y$ и $\delta_{x,y}=0$ при $x\neq y$. Используя~(\ref{eq4.29}) и~(\ref{eq4.30}) легко показать, что $(Z,\rho_{\varepsilon})$~--- ультраметрическое пространство, $|Z|=n$ и
\begin{equation}\label{eq4.31}
d_{GH}(\is Y, \is Z)<\frac{\varepsilon}{2}.
\end{equation}
По утверждению~\ref{st27} существует $(X,d)\in \mathfrak U$ такое, что $|X|=n$, $d_{GH}(\is X, \is Y)<\frac{\varepsilon}{2}$. Отсюда,~(\ref{eq4.31}) и неравенства треугольника для $d_{GH}$ следует~(\ref{eq4.28}).
\end{proof}

\normalsize

\textbf{Е. A. Петров}

Институт прикладной математики и механики

НАН Украины, г. Донецк.

Ул. Розы Люксембург 74, Донецк, Украина,  индекс 83114.

eugeniy.petrov@gmail.com

\bigskip

\textbf{A. A. Довгошей}

Институт прикладной математики и механики

НАН Украины, г. Донецк.

Ул. Розы Люксембург 74, Донецк, Украина,  индекс 83114.

aleksdov@mail.ru


\begin{thebibliography}{1}

\bibitem{HomoriHu(1961)}
R.E. Gomory and T.C. Hu.
\newblock Multi-terminal network flows.
\newblock {\em SIAM}, 9(4):551--570, 1961.

\bibitem{Finch(2003)}
Steven~R. Finch.
\newblock {\em Mathematical constants}, volume~94 of {\em Encyclopedia of
  Mathematics and its Applications}.
\newblock Cambridge University Press, Cambridge, 2003.

\bibitem{BonMur(2008)}
J.~A. Bondy and U.~S.~R. Murty.
\newblock {\em Graph theory}, volume 244 of {\em Graduate Texts in
  Mathematics}.
\newblock Springer, New York, 2008.

\bibitem{DDP(P-adic)}
D.~Dordovskyi, O.~Dovgoshey, and E.~Petrov.
\newblock Diameter and diametrical pairs of points in ultrametric spaces.
\newblock {\em P-adic Numbers, Ultrametric Analysis and Applications},
  3(4):253--262, 2011.

\bibitem{GurVyal(2012)}
V.~Gurvich and M.~Vyalyi.
\newblock Characterizing (quasi-)ultrametric finite spaces in terms of
  (directed) graphs.
\newblock {\em Discrete Appl. Math.}, 160(12):1742--1756, 2012.

\bibitem{Otter(1948)}
Richard Otter.
\newblock The number of trees.
\newblock {\em Ann. of Math. (2)}, 49:583--599, 1948.

\bibitem{Burago(2001)}
D.~Burago, Y.~Burago, and S.~Ivanov.
\newblock {\em A course in metric geometry}, volume~33 of {\em Graduate Studies
  in Mathematics}.
\newblock American Mathematical Society, Providence, RI, 2001.

\bibitem{Searcoid(2007)}
M{\'{\i}}che{\'a}l~{\'O}. Searc{\'o}id.
\newblock {\em Metric spaces}.
\newblock Springer Undergraduate Mathematics Series. Springer-Verlag London
  Ltd., London, 2007.

\end{thebibliography}
\end{document}